\crefname{equation}{}{}
\setlist[itemize]{parsep=0pt}
\setlist[enumerate]{parsep=0pt}
\newtheorem*{rep@theorem}{\rep@title}
\newcommand{\newreptheorem}[2]{%
\newenvironment{rep#1}[1]{%
 \def\rep@title{#2 \ref{##1}}%
 \begin{rep@theorem}}%
 {\end{rep@theorem}}}
\newtheorem{theorem}{Theorem}[section] 
\newtheorem{lemma}[theorem]{Lemma} \newreptheorem{theorem}{Theorem}
\theoremstyle{definition}
\newtheorem{definition}[theorem]{Definition}
\newtheorem{question}[theorem]{Question}
\newtheorem{conjecture}[theorem]{Conjecture}
\newtheorem{observation}[theorem]{Observation}
\newcommand{\floor}[1]{\left\lfloor #1 \right\rfloor}
\newcommand{\paren}[1]{\left( #1 \right)}
\newcommand{\set}[1]{\left\{ #1 \right\}}
\DeclareMathOperator*{\argmax}{argmax}
\DeclareMathOperator*{\argmin}{argmin}
\author{Natalya Ter-Saakov}
\author{Emily Zhang}
\address{Massachusetts Institute of Technology, Cambridge, MA 02139, USA}
\email{\{natalyat,eyzhang\}@mit.edu}
\title{Extremal Pattern-Avoiding Words}
\begin{document}

\maketitle

\begin{abstract}
Recently, Grytczuk, Kordulewski, and Niewiadomski defined an \textit{extremal word} over an alphabet $\mathds{A}$ to be a word with the property that inserting any letter from $\mathds{A}$ at any position in the word yields a given pattern. In this paper, we determine the number of extremal $XY_1XY_2X\dots XY_tX$-avoiding words on a $k$-letter alphabet. We also derive a lower bound on the shortest possible length of an extremal square-free word on a $k$-letter alphabet that grows exponentially in $k$. 
\end{abstract}

\section{Introduction}
A \textit{word} is a sequence of letters from a given alphabet. 
A word $U$ is a \textit{factor} of a word $W$ if $W=W_1UW_2$ for some (possibly empty) words $W_1$ and $W_2$.
A \textit{square} is a nonempty word of the form $XX$, and a word is \textit{square-free} if it does not contain a square as a factor. 

Some of the first work on square-free words was by the Norwegian mathematician Axel Thue who showed in the early 1900's that there are arbitrarily long square-free words over a three-letter alphabet \cite{thue1906uber}.
Thue's work \cite{thue1906uber, thue1906uber2} is considered to be the beginning of the field of combinatorics on words \cite{berstel2007origins}.

\begin{definition}
A word $W$ over an alphabet $\mathds{A}$ \textit{realizes} a pattern $P=p_1p_2\dots p_r$, for letters $p_i$, if $W$ can be split into nonempty factors $W_1, \dots, W_r$ such that $W_i=W_j$ if $p_i=p_j$. A word $W$ \textit{contains} a pattern $P$ if there is a factor of $W$ that realizes $P$. Otherwise, $W$ \textit{avoids} $P$.
\end{definition}

\begin{definition}
A pattern $P$ is \textit{unavoidable} on an alphabet $\mathds{A}$ if every sufficiently long word on $\mathds{A}$ contains $P$. Otherwise, $P$ is \textit{avoidable} on $\mathds{A}$. 
\end{definition}

A complete characterization of avoidable patterns was given independently by Bean, Ehrenfeucht, and McNulty \cite{bean1979avoidable} in 1979 and Zimin \cite{zimin1984blocking} in 1984.
\begin{theorem} \cite{bean1979avoidable, zimin1984blocking}\label{thm:zimin}
A pattern is unavoidable if and only if it is contained in a Zimin word, defined recursively as follows: $Z_1 = x_1$ and $Z_{n+1} = Z_{n}x_{n+1}Z_{n}$ for ${n\in \mathds{Z}^+}$.
\end{theorem}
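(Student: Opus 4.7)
The plan is to prove the two directions of this characterization separately.

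For the sufficiency direction (contained in a Zimin word implies unavoidable): Since any factor of an unavoidable pattern is itself unavoidable, it suffices to prove each $Z_n$ is unavoidable. I would induct on $n$. The base $Z_1 = x_1$ is trivial since any nonempty word realizes it. For the inductive step, assume an integer $N(n, k)$ exists such that every length-$N(n,k)$ word on a $k$-letter alphabet contains $Z_n$. Given a very long word $W$ on a $k$-letter alphabet, I would split $W$ into many consecutive blocks each long enough to realize $Z_n$, then color pairs of block boundaries by the intermediate factor (up to a bounded-size invariant) and apply a pigeonhole/Ramsey argument to find two identical $Z_n$-realizing factors separated by a single letter; these together realize $Z_{n+1} = Z_n x_{n+1} Z_n$.

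For the necessity direction (unavoidable implies contained in a Zimin word): I would prove the contrapositive. First I would verify by induction on $n$ that $P$ is a factor of some $Z_n$ if and only if $P$ can be reduced to the empty pattern by repeatedly deleting a variable that occurs exactly once. One direction uses the observation that the middle letter $x_n$ of $Z_n = Z_{n-1} x_n Z_{n-1}$ occurs only once and deleting it yields $Z_{n-1} Z_{n-1}$, in which $Z_{n-1}$ is a factor; the other direction reassembles the reduction sequence into a Zimin-style nested word. Hence if $P$ is not contained in any $Z_n$, iterated singleton-deletion terminates at a non-empty sub-pattern $P'$ in which every variable appears at least twice. To avoid $P$ it suffices to avoid $P'$, and I would construct arbitrarily long $P'$-avoiding words by exhibiting a finite-alphabet morphism $h$ whose iterates $h^m(a)$ avoid $P'$, or alternatively via an entropy-compression/probabilistic argument.

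The principal obstacle is the necessity direction. The Ramsey argument for sufficiency is classical once set up. The hard part is producing an explicit avoidance from the bare irreducibility hypothesis on $P'$: one must design the morphism (or the probabilistic argument) so that any hypothetical occurrence of $P'$ in the constructed word forces a shorter occurrence or a structural contradiction, and this is where the property ``every variable of $P'$ appears at least twice'' must enter decisively. Finding the right morphism (or the right bound on bad events in the probabilistic setup) for an arbitrary irreducible $P'$ is the technically delicate step.
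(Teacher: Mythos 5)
First, a point of reference: the paper does not prove this statement; it is quoted as a known theorem of Bean--Ehrenfeucht--McNulty and of Zimin, so there is no in-paper proof to compare against and your proposal is an attempt at the classical result itself. Your sketch of the sufficiency direction (each $Z_n$ is unavoidable, by induction together with a pigeonhole on identical $Z_n$-instances occurring in consecutive blocks) is the standard argument and is fine in outline, modulo two small points: the two identical instances need only be separated by a \emph{nonempty factor}, not a single letter, and ``$P$ is contained in $Z_n$'' means a factor of $Z_n$ \emph{realizes} $P$ (variables map to nonempty words), so what you need is transitivity of realization rather than the literal statement ``a factor of an unavoidable pattern is unavoidable.''

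The necessity direction, however, has genuine gaps. Your intermediate claim --- that $P$ is a factor of some $Z_n$ if and only if $P$ reduces to the empty pattern by repeatedly deleting a variable that occurs exactly once --- is false as stated: $Z_2 = xyx$ reduces to $xx$ and is then stuck, yet it is contained in a Zimin word and is unavoidable. The correct reduction in Bean--Ehrenfeucht--McNulty deletes \emph{free sets} of variables, defined via the adjacency graph of $P$, and a variable occurring exactly once need not form a free set (for instance $x$ in $bxaba$, where the adjacency graph contains the path $x_L$--$a_R$--$b_L$--$x_R$). Second, your assertion that ``to avoid $P$ it suffices to avoid $P'$'' does not follow when $P'$ is obtained by deleting letters from the interior of $P$: an occurrence of $P$ in a word $W$ splits a factor of $W$ into contiguous blocks, and discarding the blocks assigned to the deleted variables leaves a non-contiguous subsequence of $W$, not a factor, so $W$ containing $P$ does not force $W$ to contain $P'$. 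The actual lemma runs in the opposite direction --- from a $P'$-avoiding word one \emph{constructs} a $P$-avoiding word --- and its proof is precisely where freeness is used. Finally, the construction of arbitrarily long words over a fixed alphabet avoiding an arbitrary irreducible pattern, which you defer to ``some morphism or entropy compression,'' is the technical core of the theorem; without it, and without a correct reduction notion, the hard direction is not established.
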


Recently, Grytczuk, Kordulewski, and Niewiadomski \cite{grytczuk2019extremal} introduced an extremal variant of the study of pattern-avoiding words:
\begin{definition}
For a fixed alphabet $\mathds{A}$ and a finite word $W$ over $\mathds{A}$, an \textit{extension} of $W$ is a word $W'=W_1xW_2$ for some (possibly empty) words $W_1$ and $W_2$, where $W=W_1W_2$ and $x\in\mathds{A}$. Given a fixed pattern $P$, a word $W$ is \textit{extremal $P$-avoiding} if $W$ avoids $P$ and every extension of $W$ contains $P$. 
\end{definition}

For example, the word $$abcabacbcabcbabcabacbcabc$$ is a shortest extremal square-free word over the three-letter alphabet $\mathds{A} = \{a,b,c\}$ \cite{grytczuk2019extremal}. In their paper, Grytczuk et al.\ \cite{grytczuk2019extremal} showed that 
there are infinitely many extremal square-free words over a three-letter alphabet. Their method for generating extremal square-free words can be used to generate an infinite extremal square-free word.


In 2020, Mol and Rampersad \cite{mol2000lengths} adapted the ideas of Grytczuk et al.\ to find all integers $n$ for which an extremal square-free ternary word of length $n$ exists. Mol, Rampersad, and Shallit then studied extremal overlap-free binary words in \cite{mol2020extremal}, where an \textit{overlap} is a word of the form $aXaXa$, for any letter $a$ and any (possibly empty) word $X$. In our paper, we examine extremal pattern-avoiding words for other types of patterns, motivated by the following conjecture of Grytczuk et al.:

\begin{conjecture} \cite{grytczuk2019extremal} \label{conjecture: pattern}
For every avoidable pattern $P$, there exists a constant $k(P)$ such that the set of extremal $P$-avoiding words over a $k(P)$-letter alphabet is finite.
\end{conjecture}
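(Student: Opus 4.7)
The plan is to leverage the Bean--Ehrenfeucht--McNulty/Zimin classification (\Cref{thm:zimin}): since $P$ is avoidable, $P$ is not contained in any Zimin word, and there exists some minimum alphabet size $k_0 = k_0(P)$ on which arbitrarily long $P$-avoiding words exist. I would aim to take $k(P)$ substantially larger than $k_0$, perhaps $k(P) = k_0 + c$ for a constant $c$ depending only on $|P|$ and the number of distinct variables of $P$, and then argue that increasing the number of available letters makes it strictly harder to satisfy extremality while avoiding $P$, eventually forcing an upper bound on the length.

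First I would translate extremality into a system of local saturation conditions. A $P$-avoiding word $W = w_1 \cdots w_n$ over $\mathds{A}$ is extremal precisely when, for every boundary $i \in \{0,1,\dots,n\}$ and every letter $x \in \mathds{A}$, the extension $W_{\le i}\, x\, W_{> i}$ contains a factor realizing $P$; since $W$ itself avoids $P$, this factor must straddle the inserted $x$. Writing $P = p_1 p_2 \cdots p_r$, the combinatorial type of the realization (its left endpoint, right endpoint, and split into $r$ nonempty factors $W_1,\dots,W_r$) lies in a set of size at most polynomial in $n$ and $|P|$. The core of the argument would be a counting / pigeonhole step: for each boundary $i$, the $|\mathds{A}|$ choices of inserted letter must each produce a straddling realization, so there must be at least $|\mathds{A}|$ ``near-realizations'' of $P$ in $W$ that become full realizations upon plugging any single letter into the gap at position $i$. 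I would try to show that the number of such near-realizations is bounded by a polynomial in $|P|$ (independent of $n$) once the alphabet is large, because a large alphabet lets one perturb $W$ locally in many directions while keeping it $P$-avoiding. Combined, this should force $n \le f(|P|, k(P))$ for some explicit $f$.

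The main obstacle, which I expect to be the decisive difficulty, is the case in which the inserted letter $x$ plays the role of a very short factor while the long factors of the realization come from $W$ itself; here $x$ merely glues together two pre-existing near-occurrences of $P$ in $W$, and the crude counting above is far too weak. Overcoming this would seem to require a structural lemma of the form: \emph{if $W$ is $P$-avoiding on a $k$-letter alphabet and admits ``many'' saturated boundaries, then $W$ must already contain $P$.} Establishing such a lemma uniformly over all avoidable $P$ appears to be the crux of the conjecture, and it is precisely for structured families---such as the $XY_1XY_2X\cdots XY_tX$ patterns treated in this paper, where one can exploit the repeated variable $X$---that one has enough combinatorial handles to carry the argument out. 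A promising avenue for the general case is to promote the Zimin recursion $Z_{n+1} = Z_n x_{n+1} Z_n$ to a quantitative near-realization statement, and then try to lift the avoidable/unavoidable dichotomy of \Cref{thm:zimin} to the desired extremal/finite dichotomy.
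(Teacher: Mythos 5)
You have been handed \cref{conjecture: pattern}, which is an \emph{open conjecture} quoted from Grytczuk, Kordulewski, and Niewiadomski: the paper does not prove it, and offers only evidence for it in special cases (an exact count of extremal words for the family $XY_1XY_2X\cdots XY_tX$ in \cref{thm: natasha_main}, and a length \emph{lower} bound for squares in \cref{thm: emily_main}). So there is no proof in the paper to compare yours against, and your proposal must stand on its own. It does not stand: as you concede in your final paragraph, the ``structural lemma'' you would need is essentially a restatement of the conjecture, and you give no argument for it.

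Beyond that admitted gap, the parts you do sketch have concrete problems. First, the pigeonhole step is not justified: it is false in general that a boundary $i$ forces $|\mathds{A}|$ distinct near-realizations, because a single near-realization in which the inserted letter occupies a position governed by a pattern variable that is not forced to repeat becomes a full realization for \emph{every} inserted letter (this is exactly what happens for $XYX$, where inserting any letter between two adjacent equal letters works); you only get one constrained letter per near-realization for patterns, like $XX$, in which every position of the inserted letter is matched against a letter of $W$. Second, the hoped-for bound ``polynomially many near-realizations per boundary, independent of $n$'' is not known even in the one case this paper analyzes: for $P=XX$ the near-realizations are the almost-squares, and \cref{lemma: almost-square2} only bounds their number per starting position by $O(\log n)$, which grows with $n$. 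Third, and most tellingly, when this counting scheme is actually carried out (as in \cref{section: almost-squares}) it yields $k\lesssim \log n$, i.e.\ a \emph{lower} bound $n>(5/4)^{k/4}$ on the length of an extremal word --- the opposite direction from what finiteness requires. Finiteness needs an upper bound on $n$ (or emptiness of the set), and nothing in your sketch produces one; the paper itself explicitly leaves the existence of such an upper bound as an open question even for squares.
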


We derive an expression for the number of extremal $P$-avoiding words for each pattern $P$ of the form $XY_1XY_2X\dots XY_tX$ where $t\in \mathds{Z}^+$.
We note that \cref{thm:zimin} implies that these patterns are unavoidable.

\begin{theorem}\label{thm: natasha_main}
For $t\in \mathds{Z}^+$, the number of extremal $XY_1XY_2X\dots XY_tX$-avoiding words over a $k$-letter alphabet is $$\frac{(tk)!}{t^k}.$$
\end{theorem}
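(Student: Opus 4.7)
My plan is to establish a bijection $\Phi$ between extremal $XY_1XY_2X\cdots XY_tX$-avoiding words over $\mathds{A}$ and sequences $s = (s_1, \ldots, s_{tk}) \in \mathds{A}^{tk}$ in which every letter of $\mathds{A}$ appears exactly $t$ times. The map doubles each letter: $\Phi(s) := s_1 s_1 s_2 s_2 \cdots s_{tk} s_{tk}$. Counting such sequences by the multinomial $\binom{tk}{t,t,\ldots,t}$ will yield the claimed formula.

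For the forward direction, I would show every doubled word $W = \Phi(s)$ is extremal. To prove $W$ avoids the pattern, suppose $t+1$ spread-out occurrences of some $X = x_1 x_2 \cdots x_\ell$ lie at positions $q_0 < q_1 < \cdots < q_t$ in $W$ with gaps $q_{j+1} - q_j \geq \ell + 1 \geq 2$. Since the letter of $W$ at position $q$ equals $s_{\lfloor q/2\rfloor + 1}$, each $q_j$ yields an index $I_j := \lfloor q_j/2 \rfloor + 1$ with $s_{I_j} = x_1$; the gap bound forces $I_0 < I_1 < \cdots < I_t$. This produces $t+1$ occurrences of $x_1$ in $s$, contradicting that $x_1$ appears exactly $t$ times. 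For the extension property, observe that every letter $c$ has blocks of even sizes in $W$ and total spread count $\sigma_c(W)$—the maximum number of $c$-positions with pairwise consecutive gaps at least $2$—equal to $t$. Any insertion of $c$ either extends an existing block from size $2r$ to $2r+1$, raising its spread contribution from $r$ to $r+1$, or creates a new $c$-block of size $1$; in both cases $\sigma_c$ rises to $t+1$, realizing the pattern via $X = c$.

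For the converse, I would show every extremal $W$ has the doubled form by establishing three structural facts: \textbf{(a)} every letter of $\mathds{A}$ appears in $W$, for otherwise appending the missing letter gives an avoiding extension; \textbf{(b)} $\sigma_c(W)$ equals exactly $t$ for every letter $c$, for otherwise a suitable insertion of $c$ (for instance at an isolated position) keeps $\sigma_c \leq t$ and is avoiding; and \textbf{(c)} every block of $W$ has even size, for otherwise one can insert the block's letter into an odd block of size $2r+1$, yielding an extension with the same spread contribution $r + 1$ that still avoids $P$. Together (a), (b), (c) force each letter to appear exactly $2t$ times in $W$ in even blocks whose half-sizes sum to $t$, which is precisely the doubled-word form $\Phi(s)$ for a unique $s$.

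The main obstacle is claim (c). Since extending an odd block preserves $\sigma_c$, one must verify that no longer word $X$ gains a new spread-out occurrence in the extension. For $X = c^j$, one checks that the total $c^j$-spread in $W$ is strictly less than $t$ whenever the per-block spread contribution $\lfloor(s+1)/(j+1)\rfloor$ could increase under extending $s = 2r+1$ to $s+1 = 2r+2$, by comparing it to the per-block contribution $\lceil s/2 \rceil$ to $\sigma_c$, so that the extended word still has spread at most $t$. For $X$ mixing $c$ with other letters, a similar case analysis on how many new $X$-occurrences use the newly inserted $c$ (those are constrained to have a specific $c$-run structure around that position) bounds the total increase in spread count.
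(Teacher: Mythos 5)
Your bijection is the same one the paper uses: the extremal words are exactly the ``doubled'' words $s_1s_1s_2s_2\cdots s_{tk}s_{tk}$, where $s$ ranges over arrangements of the $k$ letters with each used $t$ times (equivalently, the paper's permutations of the multiset in which each of $1,\dots,k$ occurs $t$ times, with a run of $j$ copies of $i$ corresponding to a block $a_i^{2j}$). Your forward direction is actually tighter than the paper's: the first-letter/index argument rules out every occurrence of the pattern at once, whereas the paper only explicitly treats $X$ supported on a single letter; and your reduction of the extension property to the spread statistic $\sigma_c$ formalizes what the paper leaves implicit. The converse is where both you and the paper are thin --- the paper just says ``extending the reasoning'' from the $t=1,2$ lemmas --- and your claims (b) and (c) still require showing that the proposed avoiding extensions avoid the pattern for \emph{every} choice of $X$, not only $X=c$ or $X=c^j$; you flag this obstacle but do not close it.

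The one concrete problem is the final count. The number of sequences in $\mathds{A}^{tk}$ with every letter appearing exactly $t$ times is the multinomial $\binom{tk}{t,\dots,t}=\frac{(tk)!}{(t!)^k}$, which equals the claimed $\frac{(tk)!}{t^k}$ only for $t\le 2$ (where $t!=t$); for $t\ge 3$ the two differ, so your last step does not in fact ``yield the claimed formula.'' The discrepancy is not in your bijection: for $k=1$ and $t=3$ the unique extremal $XY_1XY_2XY_3X$-avoiding word over $\{a\}$ is $a^6$, matching $\frac{3!}{(3!)^1}=1$ rather than $\frac{3!}{3}=2$. So the count produced by this (shared) bijection is $\frac{(tk)!}{(t!)^k}$, and the formula in the theorem statement --- which the paper's own final line also asserts after counting the same multiset permutations --- appears to be in error, coinciding with the correct value only in the worked cases $t=1,2$. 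You should either state and prove the corrected formula or exhibit extremal words outside the doubled form; as written, the proposal proves a statement different from the one claimed.
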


We further extend the work of Grytczuk et al.\ \cite{grytczuk2019extremal}, who specifically studied ternary extremal square-free words, by studying extremal square-free words on alphabets of larger sizes.
\begin{theorem}\label{thm: emily_main}
For any integer $k\geq 3$, every extremal square-free word on a $k$-letter alphabet has length greater than $$ \paren{\frac{5}{4}}^{\frac{k}{4}}.$$
\end{theorem}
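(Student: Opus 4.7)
The plan is to extract structural constraints from the extremal condition at every (position, letter) pair and combine them via a counting argument, then amplify the resulting polynomial bound into the required exponential lower bound in $k$.

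First, let $W = w_1 \cdots w_n$ be an extremal square-free word over a $k$-letter alphabet $\mathds{A}$, and fix a pair $(i,x)$ with $i \in \{1, \ldots, n+1\}$ and $x \in \mathds{A}$. The extension $W'$ obtained by inserting $x$ at position $i$ must contain a square $YY$, and because $W$ itself is square-free this square must overlap the inserted letter. Let $m(i,x)$ denote the length of a shortest such $Y$. A direct case analysis on whether $x$ lies in the first or second copy of $Y$ shows that $W$ has a factor of length $2m(i,x)-1$ of one of the two forms $ABAxB$ or $AxBAB$, with $|A|+|B|=m(i,x)-1$. Matching the two copies of $Y$ against each other forces a genuine occurrence of $x$ in $W$ within distance at most $m(i,x)$ of the insertion slot $i$, yielding the key inequality $m(i,x) \geq d(i,x)$, where $d(i,x)$ is the distance from position $i$ to the nearest occurrence of $x$ in $W$.

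Next I would aggregate these inequalities. Since $\sum_i d(i,x) = \Omega\!\left(n^2/f(x)\right)$ whenever $x$ has $f(x)$ occurrences in $W$, and $\sum_x f(x) = n$, Cauchy--Schwarz gives $\sum_{i,x} m(i,x) \geq \Omega(kn)$. In the opposite direction, I would bound $\sum_{i,x} m(i,x)$ from above by counting the admissible near-square factors of the forms $ABAxB$ and $AxBAB$: their rigid shape means each starting position in $W$ admits only boundedly many decompositions of this type at each length, and combined with a Fraenkel--Simpson-style bound on the number of distinct squares of a given length this yields an upper bound polynomial in $n$.

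The main obstacle will be upgrading the polynomial bound to the exponential bound $(5/4)^{k/4}$. I expect this requires iterating the structural argument: once an initial collection of forced factors is identified, the extremal condition can be applied locally near those factors to produce yet more forced structure, each round strictly lengthening $W$ by a multiplicative factor of at least $(5/4)^{1/4}$. Organising this iteration demands careful bookkeeping of which letters and positions have already been ``consumed'', so that newly forced factors cannot overlap redundantly with previously identified ones. This is presumably where the grouping of the $k$ letters into blocks of four (and the specific constant $5/4$) enters, likely through a dichotomy showing that among every four letters at least one must introduce a fresh multiplicative contribution to the length.
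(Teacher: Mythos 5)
You correctly identify the central object: every one of the $k(n+1)$ extensions of an extremal square-free word $W$ must create a square straddling the inserted letter, so $W$ contains many factors that become squares after a single insertion (the paper calls these \emph{almost-squares}), and each such factor accounts for at most two extensions, giving at least $k(n+1)/2$ of them. But the proposal stalls exactly where the real work lies. The exponential bound is not obtained by ``upgrading a polynomial bound by iteration''; it comes from proving that a \emph{square-free} word of length $n$ contains fewer than $2n\log_{5/4}n$ almost-square factors, so that $k(n+1)/2\leq 2n\log_{5/4}n$ forces $k<4\log_{5/4}n$, which after exponentiating is already $n>(5/4)^{k/4}$. The paper's key lemma shows that if $WW'$ and $WW'V$ are both almost-squares occurring at the same starting position inside a square-free word, then $|V|>\frac{1}{2}|W|$ (otherwise a square is forced by an overlap argument); hence the lengths of almost-squares anchored at any fixed position grow geometrically with ratio $5/4$ every two steps, and there are at most $2\log_{5/4}n$ per position. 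Your proposal contains no substitute for this: an upper bound on the number of near-square factors that is merely polynomial in $n$, combined with your lower bound $\Omega(kn)$, yields only polynomial information such as $n=\Omega(k)$, never anything exponential in $k$.

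Your final paragraph concedes the gap: the ``iteration'' gaining a factor $(5/4)^{1/4}$ per round and the ``dichotomy among blocks of four letters'' are placeholders, not arguments; in the actual proof the constant $5/4$ and the exponent $k/4$ come from the geometric-growth ratio in the nested-almost-square lemma together with two factor-of-two losses (two extensions per almost-square, two parity classes of lengths), not from grouping letters into fours. Two further misdirections: the statistic to control from above is the \emph{number} of distinct almost-square occurrences, not the sum $\sum_{i,x}m(i,x)$ of their lengths, and the inequality $m(i,x)\geq d(i,x)$ is never needed. As written, the proposal does not prove the theorem.
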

\subsection{Notation}
Throughout this paper, we use the notation $[n]:=\{1,2,\dots,n\}$. 
For an arbitrary set $S$, a totally ordered set $Y$, and a function $g:S\rightarrow Y$, we define $\argmax_{x\in S} g(x)$ to be any 
$x^*\in S$ such that $g(x) \leq g(x^*)$ for all $x\in S$. We define $\argmin_{x\in S} g(x)$ analogously.

\subsection{Outline}
The rest of this paper is structured as follows. In \cref{section: natasha}, we prove \cref{thm: natasha_main}. In \cref{section: almost-squares}, we prove \cref{thm: emily_main}. Finally, we conclude with some directions for future research in \cref{section:future}.

\section{Extremal $XY_1XY_2X\dots XY_tX$-avoiding words} \label{section: natasha}

In this section, we prove our formula for the number of extremal $XY_1XY_2X\dots XY_tX$-avoiding words on a $k$-letter alphabet. We will start by looking at the cases $t=1,2$ and then generalize. 


\begin{observation}
Any extremal $XY_1XY_2X\dots XY_tX$-avoiding word over a fixed alphabet $\mathds{A}$ must use all of the letters in $\mathds{A}$, since inserting an unused letter at the beginning of an extremal $XY_1XY_2X\dots XY_tX$-avoiding word forms an extension that avoids $XY_1XY_2X\dots XY_tX$.
\end{observation}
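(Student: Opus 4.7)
The plan is to argue by contradiction: suppose $W$ is extremal $P$-avoiding for $P = XY_1XY_2\cdots XY_tX$, and suppose some letter $a \in \mathds{A}$ does not appear in $W$. Form the extension $W' = aW$ (which is a valid extension, corresponding to $W_1$ empty and $W_2 = W$). By extremality, $W'$ must contain $P$, i.e., there is a factor $U$ of $W'$ that realizes $P$. I will show this forces $U$ to actually lie entirely inside $W$, contradicting the $P$-avoidance of $W$.

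Unpacking the definition of realization, $U$ decomposes as nonempty factors $U = A B_1 A B_2 A \cdots A B_t A$, where the common factor $A$ (matching the letter $X$) appears exactly $t+1 \geq 2$ times. The key observation is that $a$ occurs at most once in $W'$, namely at position $1$, because by assumption $a$ does not occur in $W$. If $A$ contained the letter $a$, then $U$ would contain $a$ at least $t+1 \geq 2$ times, which is impossible; hence $A$ is a word over $\mathds{A}\setminus\{a\}$. In particular $A$'s first letter is not $a$, so the first occurrence of $A$ in $U$ cannot begin at position $1$ of $W'$. Therefore $U$ starts at position $\geq 2$ of $W'$, which means $U$ is a factor of $W$ itself, so $W$ contains $P$, contradicting that $W$ avoids $P$.

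I expect the proof to be quite short; the only subtle point is the boundary case where $U$ might use the prepended letter $a$. The counting $a$-occurrences argument above dispatches this cleanly because $P$ has $X$ appearing at least twice while $a$ appears at most once in $W'$. No further case analysis should be needed.
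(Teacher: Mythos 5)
Your proof is correct and takes the same route as the paper, which simply asserts in the observation itself that prepending an unused letter yields a pattern-avoiding extension; you have merely filled in the (easy but worthwhile) justification that any realization of the pattern in $aW$ would need the $X$-factor to occur $t+1\geq 2$ times and hence cannot involve the single occurrence of $a$, forcing the realizing factor into $W$ itself.
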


\begin{lemma}\label{lemma:nat1}
There are $k!$ extremal $XYX$-avoiding words on a $k$-letter alphabet.
\end{lemma}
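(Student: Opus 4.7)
The plan is a two-step structural reduction that identifies extremal $XYX$-avoiding words with permutations of the alphabet, after which the count is immediate.

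First, I will characterize $XYX$-avoiding words in purely structural terms: a word $W$ avoids $XYX$ if and only if each letter appears as at most one contiguous block of length $1$ or $2$. The forward direction rests on the one-letter case of the pattern. If some letter $a$ has two occurrences at positions $i<j$ with $j\geq i+2$, then the factor between them realizes $XYX$ with $X=a$; and three consecutive copies of $a$ realize $XYX$ with $X=Y=a$. For the reverse direction, any factor $UVU$ with $U,V$ nonempty forces the first letter of $U$ to appear at two positions separated by $|U|+|V|\geq 2$, violating the block structure.

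Combining this characterization with the observation that every letter of the alphabet must appear, any extremal $XYX$-avoiding word on a $k$-letter alphabet takes the form $c_1^{n_1}c_2^{n_2}\cdots c_k^{n_k}$, where $c_1,\dots,c_k$ is a permutation of the alphabet and each $n_i\in\{1,2\}$. I will then apply extremality to force $n_i=2$ for every $i$: if some $n_i=1$, then inserting a second copy of $c_i$ adjacent to its block keeps the word within the structural form above, producing an $XYX$-avoiding extension and contradicting extremality.

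Finally, I will verify that each candidate word $W=c_1c_1c_2c_2\cdots c_kc_k$ is genuinely extremal. Any inserted letter $x$ equals some $c_i$; if the insertion is adjacent to (or inside) the existing $c_ic_i$ block, the extension contains $c_ic_ic_i$, which realizes $XYX$ with $X=Y=c_i$, while otherwise the inserted $c_i$ is separated from the block by at least two letters, again producing a non-adjacent repetition and hence an $XYX$ factor. The resulting bijection between extremal words and permutations of the alphabet then yields the count $k!$.

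The main potential obstacle is pinning down the $XYX$-avoidance characterization for general factors $UVU$ rather than only the single-letter case; the resolution is that the first letter of $U$ already creates a non-adjacent repeat, so the one-letter case controls everything. The remaining verifications are straightforward insertion-position bookkeeping.
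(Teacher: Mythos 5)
Your proposal is correct and follows essentially the same route as the paper's proof: both identify the extremal $XYX$-avoiding words with the concatenations $c_1c_1c_2c_2\cdots c_kc_k$ over permutations of the alphabet, with your write-up simply making the block-structure characterization and the final extremality check more explicit than the paper does. (One trivial slip: an inserted $c_i$ that is not adjacent to its block may be separated from it by only one letter, not two, but a single letter of separation already yields an $XYX$ factor, so the argument is unaffected.)
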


\begin{proof}
We will start by considering the single-letter alphabet $\mathbb{A}=\{a\}$. The word $aaa$ is the shortest word to contain $XYX$ ($X=a, Y=a)$. So, the word $aa$ is extremal $XYX$-avoiding. Notice that any longer word is not $XYX$-avoiding and any shorter word is not extremal.

Now we consider the two-letter alphabet $\mathbb{A}=\{a,b\}$. All instances of the letter $a$ in an $XYX$-avoiding word must be consecutive. Furthermore, the factor of $a$'s must be of length $2$ for the word to avoid $XYX$ and be extremal ($abb$ is not extremal $XYX$-avoiding because the extension formed by inserting an $a$ after the first letter does contain $XYX$). As such, the only extremal $XYX$-avoiding words on a two-letter alphabet are $aabb$ and $bbaa$.

Having looked at specific cases, we consider the $k$-letter alphabet $\mathbb{A}=\{a_1, a_2, \dots , a_k\}$. Any instances of the letter $a_i$ in an $XYX$-avoiding word must be consecutive. The factor of $a_i$'s must be of length $2$ for the word to avoid $XYX$ and be extremal. So any extremal $XYX$-avoiding word on the $k$-letter alphabet will be of the form $$a_{i_1}a_{i_1}a_{i_2}a_{i_2}\dots a_{i_k}a_{i_k}.$$  Therefore, there are $k!$ extremal $XYX$-avoiding words on a $k$-letter alphabet.
\end{proof}

\begin{lemma} \label{lemma:nat2}
The number of extremal $XY_1XY_2X$-avoiding words on a $k$-letter alphabet is $$\frac{(2k)!}{2^k}.$$
\end{lemma}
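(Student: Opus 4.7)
My plan is to prove that every extremal $XY_1XY_2X$-avoiding word on a $k$-letter alphabet has length exactly $4k$ with a rigid block structure, and then establish a bijection with length-$2k$ sequences over $\mathds{A}$ using each letter exactly twice. Since the number of such sequences is $\binom{2k}{2,2,\dots,2}=\frac{(2k)!}{(2!)^k}=\frac{(2k)!}{2^k}$, the count follows.

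First, I would reduce to $|X|=1$: any realization of $XY_1XY_2X$ with $|X|>1$ can be truncated to one whose $X$ is the first letter of the original $X$ (absorbing the rest of $X$ into each subsequent $Y_i$), so the pattern is contained in $W$ iff some letter $a$ has three occurrences at positions $p_1<p_2<p_3$ with $p_2\ge p_1+2$ and $p_3\ge p_2+2$. Writing the blocks (maximal runs) of $a$ in $W$ with sizes $s_1,\dots,s_m$, a short calculation (independent sets on a disjoint union of paths) shows that the maximum number of $a$-positions with all consecutive gaps at least $2$ is $\sum_i \ceil{s_i/2}$. Hence $W$ avoids $XY_1XY_2X$ iff $S_a(W):=\sum_i \ceil{s_i/2}\le 2$ for every letter $a$.

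Next, I would characterize extremality. Inserting a letter $b$ at any position in $W$ affects only the block structure of $b$: the new $b$ either extends an existing $b$-block by one or starts a new singleton block. The quantity $S_b$ then increases by exactly $1$ in every case except when the insertion extends an odd-size block (in which case $\ceil{\cdot}$ does not change). For the extended word to realize the pattern with $X=b$ we need $S_b$ to jump from at most $2$ to at least $3$, so extremality forces $S_a=2$ for every letter $a$ and forbids any odd-size block. With all blocks of even size and $\sum_i s_i/2=2$ for each letter, each letter appears either as one block of size $4$ or as two blocks of size $2$; in particular the word has length $4k$.

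Finally, I would set up the bijection. Given an extremal word, split each length-$4$ block of every letter into two consecutive length-$2$ sub-blocks, producing a canonical decomposition of the word into $2k$ length-$2$ units, each labeled by a letter, with each letter used twice; record the sequence of labels. Conversely, given a sequence $(u_1,\dots,u_{2k})$ over $\mathds{A}$ with each letter appearing exactly twice, the word $u_1u_1u_2u_2\cdots u_{2k}u_{2k}$ has only even-size blocks (two adjacent equal $u_j$'s merge into a length-$4$ block, and otherwise each $u_j$ gives an isolated length-$2$ block), so by the previous step it is extremal. The two maps are clearly mutually inverse, yielding $\frac{(2k)!}{2^k}$ extremal words. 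The main obstacle is the extremality characterization, which requires carefully tracking how each type of insertion interacts with the parities of the existing blocks to ensure every extension really does boost some $S_b$ to at least $3$.
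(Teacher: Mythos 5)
Your proposal is correct and follows essentially the same route as the paper: both arguments reduce to showing that in an extremal word each letter must occupy either one run of length $4$ or two runs of length $2$, and both then count via permutations of the multiset $\{1,1,2,2,\dots,k,k\}$. Your invariant $S_a=\sum_i\lceil s_i/2\rceil$ simply makes rigorous what the paper asserts informally; the one point to nail down when deriving necessity is to use insertions that cannot split another letter's run (e.g.\ at an end of the word, or adjacent to an existing $b$-run), since an inserted $b$ could in principle create the pattern with $X=c\neq b$ by splitting a $c$-run.
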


\begin{proof}
Let us start by considering the single-letter alphabet $\mathbb{A}=\{a\}$. The word $aaaaa$ is the shortest word to contain $XY_1XY_2X$ ($X=a, Y_1=a, Y_2=a$). So the only extremal $XY_1XY_2X$-avoiding word is $aaaa$.

Now we consider the two-letter alphabet $\mathbb{A}=\{a,b\}$. All instances of the letter $a$ in an $XY_1XY_2X$-avoiding word must either be consecutive or split into two factors separated by a consecutive sequence of the other letter.
If all of the instances of the letter $a$ are consecutive, then the factor of $a$'s must be of length $4$ in order for the word to be extremal. Otherwise, the two factors of $a$'s must both be of length $2$; to see this, consider an extension of the word formed by inserting $a$ into one of the two factors of $a$'s. The extension must contain $XY_1XY_2X$ with $X=a$, so one of the factors of $a$'s must realize $XYX$ after the extension. Thus, both of factors of $a's$ must be extremal $XYX$-avoiding. It follows that there are six extremal $XY_1XY_2X$-avoiding words:
\begin{align*}
    &aaaabbbb &&aabbbbaa 
    &aabbaabb\\
    &bbbbaaaa &&bbaaaabb
    &bbaabbaa.
\end{align*}

Finally we consider the $k$-letter alphabet $\mathbb{A}=\{a_1, a_2, \dots , a_k\}$. All instances of the letter $a_i$ in an $XY_1XY_2X$-avoiding word must either be consecutive in a factor of length $4$ or in two consecutive sequences, each of length $2$. Thus, the extremal $XY_1XY_2X$-avoiding words are in bijection with permutations of the multiset $\{1,1,2,2,\dots ,k,k\}$.
In such a permutation, each number $i$ corresponds to a factor of $a_i$'s of length $2$. For example, the permutation $123321$ corresponds bijectively to the word $$a_1a_1a_2a_2a_3a_3a_3a_3a_2a_2a_1a_1,$$ an extremal $XY_1XY_2X$-avoiding word on a three-letter alphabet. Therefore, the number of extremal $XY_1XY_2X$-avoiding words on a $k$-letter alphabet is 
\begin{align*}
    \frac{(2k)!}{2^k}. &\qedhere
\end{align*}
\end{proof}

Having looked at the cases for $t=1,2$, we will now take the elements and understanding from the lemmas to generalize to the proof of \cref{thm: natasha_main}.

\begin{proof}[Proof of \cref{thm: natasha_main}] 
Let $\mathbb{A}=\{a_1,a_2,\dots,a_k\}$. We claim that the set of extremal $XY_1XY_2X\dots XY_tX$-avoiding words over a $k$-letter alphabet is in bijection with the set of permutations of the multiset $\{1,1,\dots,1,2,2,\dots,2,\dots, k,k,\dots,k\}$ where every element occurs $t$ times. 
In such a permutation, a consecutive substring of $i$'s of length $j$ corresponds to a factor of $a_i$'s of length $2j$ in the extremal $XY_1XY_2X\dots XY_tX$-avoiding word. Since $2j$ is the length of an extremal $XY_1XY_2X\dots XY_jX$-avoiding word on a one-letter alphabet, each factor of $2j$ consecutive $a_i$'s does not realize $XY_1XY_2X\dots XY_jX$, so we know that our word does not contain  $XY_1XY_2X\dots XY_tX$ with $X=a_i$ overall. 
Now we consider extensions of the word created from such a permutation. If the letter $a_i$ is inserted into a factor of $a_i$'s of length $2j$, that factor will realize $XY_1XY_2X\dots XY_jX$, and the whole word will contain $XY_1XY_2X\dots XY_tX$. Inserting $a_i$ anywhere else in the word will also create an extension that contains $XY_1XY_2X\dots XY_tX$, where we take one $a_i$ for every $i$ in the permutation along with the one that was just inserted to be the $X$'s in the pattern. 
By symmetry of the $a_i$'s, the word created from a permutation of the multiset is an extremal $XY_1XY_2X\dots XY_tX$-avoiding word. 

Extending the reasoning from \cref{lemma:nat1} and \cref{lemma:nat2}, we can see that all extremal $XY_1XY_2X\dots XY_tX$-avoiding words must be of the form described above.
Therefore, the number of extremal $XY_1XY_2X\dots XY_tX$-avoiding words on a $k$-letter alphabet is 
\begin{align*}
    \frac{(tk)!}{t^k}. &\qedhere
\end{align*}
\end{proof}

\section{Extremal square-free words on a $k$-letter alphabet} \label{section: almost-squares}
In this section, we prove \cref{thm: emily_main} by introducing and studying a new type of word that we call an \textit{almost-square}:

\begin{definition}
An \textit{almost-square} is a word of the form $WW'$, where $W'$ is either an extension of $W$ or is obtained by deleting one letter from $W$. 
\end{definition}

For example, the words $a$, $aba$, and $ababc$ are almost-squares.
We note that all almost-squares have odd length.

We will use an upper bound on the number of almost-squares in any square-free word to derive the lower bound in \cref{thm: emily_main}.
It is easy to see that the number of almost-squares in any word of length $n$ is $O(n^2)$. We will show that the number of almost-squares in any square-free word of length $n$ is $O(n\log n)$. We begin with a lemma.

\begin{lemma} \label{lemma:almost-square}
Let $WW'$ be any almost-square on an alphabet $\mathds{A}$, and let $V$ be a word on the same alphabet such that $WW'V$ is an almost-square $XX'$. If $4\leq|V|\leq \frac{1}{2}|W|$, then $XX'$ is not square-free. 
\end{lemma}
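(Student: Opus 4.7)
We set $n = |W|$ and $m = |X|$, placing $WW'V = XX'$ on a single string $S$ so that $W$ occupies positions $[1,n]$, $W'$ occupies $[n+1, n+|W'|]$, and $V$ follows. Write $d = m - n$ and $\delta_W = |W'|-n,\ \delta_X = |X'|-m \in \{-1,+1\}$. Comparing lengths gives $|V| = 2d + \delta_X - \delta_W$, so the hypothesis $4 \leq |V| \leq |W|/2$ forces $d \leq |W|/4 + 1$; a case check on $(\delta_W, \delta_X)$ shows $d \geq 1$ in every subcase, with $d \geq 3$ when $\delta_W = +1,\ \delta_X = -1$. The plan is to combine the two almost-square structures to exhibit a short effective period on a long subinterval of $S$, from which a squared factor in $XX'$ arises.

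Because $W'$ differs from $W$ by a single insertion or deletion, there is an exception position $q_W$ such that, for $j$ in the $W'$-portion of $S$ with $j \neq n+q_W$, we have $S[j] = S[j-n]$ if $j < n+q_W$ and $S[j] = S[j-n-\delta_W]$ if $j > n+q_W$. An analogous statement for $(X,X')$ gives an exception $q_X$ and per-relation shifts in $\{m,\ m+\delta_X\}$. In the overlap $j \in [m+1,\ n+|W'|]$, both pairs of relations apply; combining them (away from the two exceptions) yields $S[k] = S[k-s]$ for some $k$ close to $j-n$, where the effective shift $s$ is the difference of the two applicable per-relation shifts. One checks that $s$ takes one of at most three values in $\{d-2, d-1, d, d+1, d+2\}$, and that $s \geq 1$ always: the borderline $s = d-2$ occurs only for $(\delta_W, \delta_X) = (+1, -1)$, where $|V| \geq 4$ forces $d \geq 3$.

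If the equation $S[k] = S[k-s]$ holds on $s$ consecutive values of $k$ in some subinterval, then the factor of length $2s$ consisting of those $s$ positions together with the $s$ immediately preceding positions is a square, giving a squared factor of $XX'$ and finishing the proof. So it remains to find, in every configuration, a subinterval of the overlap whose length is at least its effective shift. The overlap has total length $n - d + \delta_W \geq 3|W|/4 - 2$, and the two exceptions partition it into at most three contiguous subintervals, one per distinct effective shift; each shift is at most $d+2 \leq |W|/4 + 1$.

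The main obstacle is this final step: the naive pigeonhole bound, comparing average subinterval length to maximum effective shift, is tight. I would settle it by case analysis on the relative positions of $q_W$ and $q_X$ within the overlap. When an exception lies near a boundary of the overlap, the adjacent outer subinterval shrinks but the remaining subintervals must together absorb nearly the entire overlap length, forcing one of them to exceed its effective shift; when both exceptions lie centrally, each outer subinterval has length close to $|W|/4$ and carries shift at most $d$, which is strictly smaller in the relevant $(\delta_W,\delta_X)$ subcases by the slack in the upper bound on $|V|$. Any residual borderline configuration can be handled by a Fine--Wilf style combination of two adjacent subintervals whose effective shifts differ by $1$ or $2$: the resulting coprime pair of periods yields an even shorter period and hence a square.
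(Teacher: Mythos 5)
Your reduction is sound up to the last step, and it is a genuinely different route from the paper's. Writing $d=|X|-|W|$, $\delta_W=|W'|-|W|$, $\delta_X=|X'|-|X|$, the identity $|V|=2d+\delta_X-\delta_W$ is correct, the per-position relations with one exceptional position per insertion are correct, and the observation that a run of $s$ consecutive positions with $S[k]=S[k-s]$ and $s\ge 1$ yields a square factor of $XX'$ is the right engine. (The paper instead makes a targeted two-case split -- on whether the last $\ell=|V|-(|X|-|W|)$ letters of $W$ and $W'$ coincide, and whether the last $|V|$ letters of $X$ and $X'$ coincide -- and exhibits a square directly in each case, so it never faces your counting problem.)

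However, the proposal is not a proof: the entire content of the lemma sits in the final step, which you leave as a plan, and the plan has two concrete defects. First, the ``Fine--Wilf style combination of two adjacent subintervals'' is not a valid use of Fine--Wilf: that theorem requires both periods to hold on a \emph{common} interval of length at least $s_1+s_2-\gcd(s_1,s_2)$, whereas your two shifts hold on disjoint adjacent intervals, and you give no mechanism for transporting either period across the breakpoint. Second, your diagnosis of the pigeonhole is off. Comparing average subinterval length to the maximum shift does fail, but the correct comparison -- total usable overlap length versus the \emph{sum} of the shifts, carried out separately in the four cases $(\delta_W,\delta_X)\in\{-1,+1\}^2$ -- actually closes, with slack $1$ or $2$ each time. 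The bookkeeping you are missing: a deletion ($\delta=-1$) has no exceptional position, so the number of positions lost from the overlap is only the number of insertions among $W',X'$ whose exception lies in the overlap; the overlap has length $|W|-d+\delta_W$; and $4d=2|V|-2\delta_X+2\delta_W$ combined with $4\le|V|\le\tfrac12|W|$ gives, e.g.\ for $\delta_W=\delta_X=+1$, that the three shifts are $d$, $d\pm1$, $d$, so $\sum(s_i-1)\le 3d-2$, while the usable length is at least $|W|-d-1\ge 3d-1$; the other three cases work out similarly (in the case $\delta_W=+1$, $\delta_X=-1$ one also needs $d\ge 3$, which you correctly derived, to keep every shift positive). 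Until that case analysis is actually written out, the lemma is not established; with it, your approach does go through.
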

\begin{proof}
\begin{sloppypar}
Let $WW'V = XX'$, where $WW'$ and $XX'$ are almost-squares and ${4\leq |V| \leq \frac{1}{2}|W|}$.
For the sake of contradiction, we assume that $XX'$ is square-free.
We show in the appendix that we have $|W|<|X|<|WW'|$, and the figure below shows the word $XX'$ broken up in two different ways: as $WW'V$ and $XX'$. 
We define ${\ell := |V| - (|X|-|W|)}$, and we show in the appendix that $0 < \ell <|W| -1$ and $|V|<|X|-1$. The last $\ell$ letters of both $W$ and $W'$ are indicated with blue in the figure below, and the last $|V|$ letters of $X$ and $X'$ are indicated with red. 
\begin{center}
\includegraphics[scale=.65]{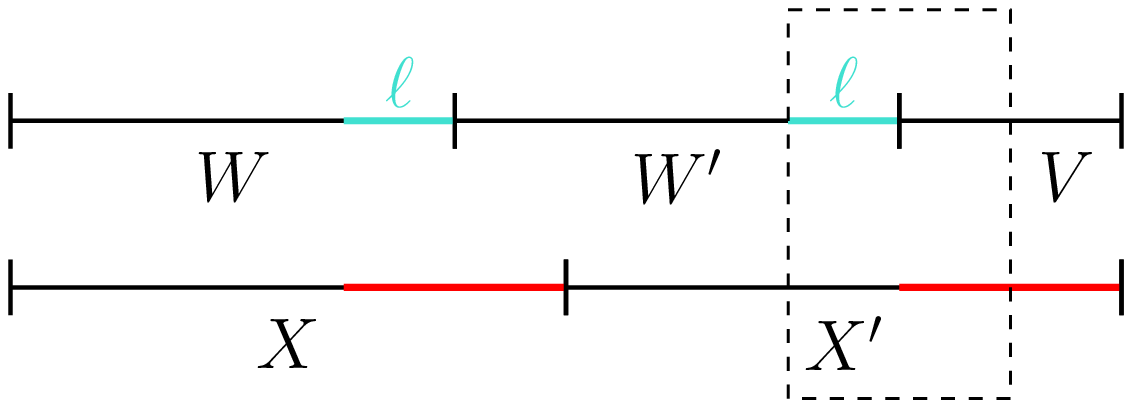}
\end{center}

We consider the following two statements:\\
\noindent Statement 1. The two factors of the word indicated above with blue are identical. \\
Statement 2. The two factors of the word indicated above with red are identical.

If both Statement 1 and Statement 2 hold true, then the first $\ell$ letters of both of the red segments are identical to both of the identical blue segments, so the factor of the word $XX'$ enclosed above in the dashed box is a square, a contradiction. Thus, at least one of the two statements must fail.

\noindent \textbf{Case 1.} Statement 2 fails. \\
Consider the figure shown below. 
The first $|X| - |V|$ letters of $X$ and $X'$ (shown in green) must be identical in this case. 

The first $|W| - \ell - 1$ letters of $W$ and $W'$ are shown in orange. 
Since ${|V|\leq \frac{1}{2}|W|<|W|-1}$, we have that $|W| + (|W|-\ell -1)> |X|$, so the orange part of $W'$ must overlap with $X'$.
The inequality $|V|< |W|$ implies $|X|-|V| > |X| - |W|$, so each factor shown in green is longer than the factor of $XX'$ that is a factor of both $W'$ and $X$. 
Thus, if the two factors shown in orange are identical, then the factor highlighted in yellow is a square of length $2(|X|-|W|)$.

It follows that the last $\ell+1$ letters of $W$ and $W'$ (shown in blue; figure not precise) must be identical.
We can check that we have $|W| + (|W'| - \ell - 1) + 2 \leq |X| + (|X|-|V|)$, so the blue part of $W'$ and the green part of $X'$ overlap by at least two letters. Additionally, the inequality $|V|\leq \frac{1}{2}|W|$ gives us $2(\ell +1)\leq |W|$. Putting all of this together, we see that the factor enclosed in the dashed box is a square of length at most $2(\ell +1)$, so we have reached a contradiction.
\begin{center}
\includegraphics[scale=.65]{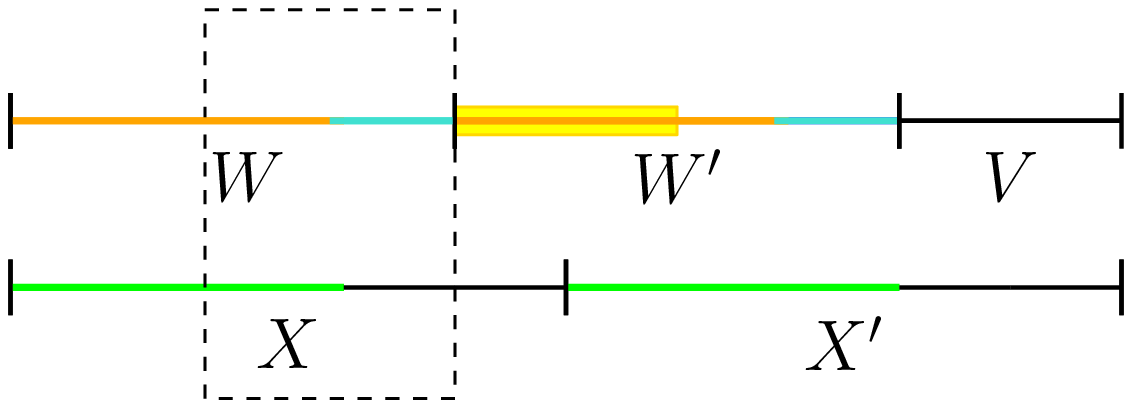}
\end{center}
\noindent \textbf{Case 2.} Statement 1 fails.\\
Consider the figure shown below. 
The first $|W|-\ell$ letters of $W$ and $W'$ (shown in orange) must be identical in this case. The last $|W|$ letters of $X$ and $X'$ are shown in red.
We can check that we have $|W|-\ell > |X| - |W|$, so the orange part of $W$ overlaps with the red part of $X$. Similarly, we can check that we have $|W|+(|W|-\ell)\geq |X| + (|X'| -|W|)$, so the orange part of $W'$ overlaps with or is adjacent to the red part of $X'$.

If the first $|X|-|W|$ letters of $X$ and $X'$ (shown in green) are identical, then the factor highlighted in yellow is a square of length $2(|X| -|W|)$. Thus, the last $|W|$ letters of $X$ and $X'$ (shown in red) must be identical. 
However, this implies that the factor enclosed in the dashed box is a square, so we have reached a contradiction.
\begin{center}
\includegraphics[scale=.65]{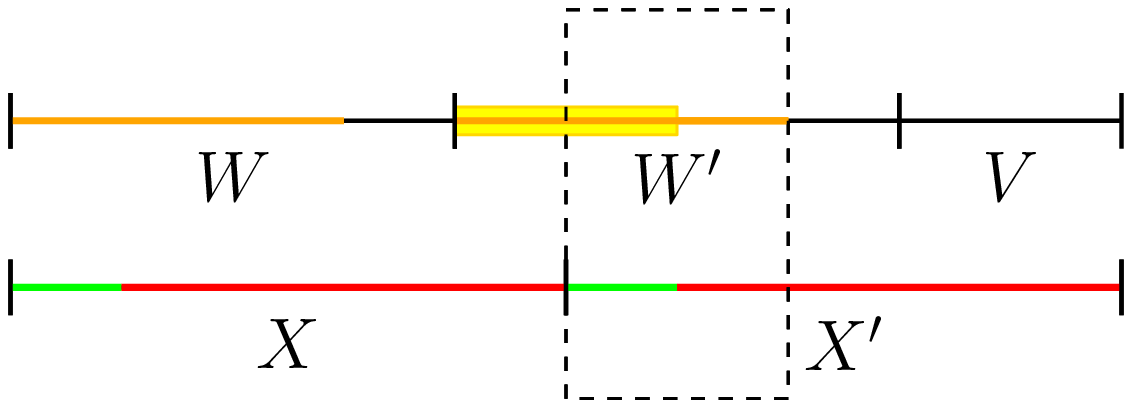}
\end{center}
\end{sloppypar}
\end{proof}

\begin{lemma} \label{lemma: almost-square2}
For any square-free word $Q$ of length $n\geq 2$, the number of almost-squares in $Q$ that start at any given letter of $Q$ is less than $$2\log_{\frac{5}{4}}{n}.$$
\end{lemma}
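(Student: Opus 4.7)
The plan is to fix a starting position in $Q$ and to consider the almost-squares of $Q$ beginning at that position, listed in increasing order of length as $\ell_1 < \ell_2 < \cdots < \ell_m$. Since every almost-square has odd length, successive lengths differ by at least $2$, so $\ell_{i+2} - \ell_i \geq 4$ for every $i$; in particular $\ell_2 \geq 3$ and $\ell_3 \geq 5$, since the three smallest odd positive integers are $1,3,5$.

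Next I would feed each pair $(\ell_i, \ell_{i+2})$ into \cref{lemma:almost-square}. Because the two almost-squares share the same starting position, the shorter is a prefix of the longer, so they can be written as $WW'$ and $WW'V = XX'$ with $|V| = \ell_{i+2} - \ell_i \geq 4$. Since $Q$ is square-free, the factor $XX'$ is square-free, and \cref{lemma:almost-square} therefore forces $|V| > |W|/2$. Using the fact that an almost-square of length $\ell_i$ has a $W$-part of length at least $(\ell_i - 1)/2$, this rearranges to $4\ell_{i+2} > 5\ell_i - 1$, and since both sides are integers it upgrades to $4\ell_{i+2} \geq 5\ell_i$, yielding the core recurrence $\ell_{i+2} \geq \frac{5}{4}\ell_i$.

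Iterating this inequality gives $\ell_{2K+1} \geq (5/4)^K \ell_1$ and $\ell_{2K} \geq (5/4)^{K-1}\ell_2$. Combined with $\ell_m \leq n$, I would then split by the parity of $m$, and when $m$ is odd, by whether $\ell_1 = 1$ or $\ell_1 \geq 3$. In the standard branches ($m$ even, or $m$ odd with $\ell_1 \geq 3$), the initial factor of $3$ from $\ell_2 \geq 3$ or $\ell_1 \geq 3$ produces an extra $-2\log_{5/4} 3$ in the upper bound on $m$, which beats any additive slack because $(5/4)^4 < 3$, giving $m < 2\log_{5/4} n$.

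The main obstacle is the remaining branch, $m$ odd with $\ell_1 = 1$, where the naive bound $(5/4)^K \leq n$ yields only $m \leq 2\log_{5/4} n + 1$, too weak by about one. I would resolve this by instead starting the recurrence at $\ell_3 \geq 5$, obtaining $m \leq 2\log_{5/4}(n/5) + 3$, and then using $(5/4)^7 < 5$ to conclude $m < 2\log_{5/4} n$. The trivial sub-case $m = 1$ is handled directly using $n \geq 2$.
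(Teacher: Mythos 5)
Your proof is correct and follows essentially the same route as the paper: both derive the key recurrence $\ell_{i+2} \geq \tfrac{5}{4}\ell_i$ by applying \cref{lemma:almost-square} to two nested almost-squares sharing a starting position, and then iterate. Your endgame is in fact slightly more careful than the paper's, which passes from $a_{2j+1} > (5/4)^{j}$ directly to $|A| < 2\log_{5/4} n$ and thereby glosses over the extra $+1$ in the odd case that you handle explicitly via $\ell_3 \geq 5$.
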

\begin{proof}
Let $Q$ be any square-free word of length $n\geq 2$. We define $$p := \argmax_{i\in[n]}\paren{\text{number of almost-square factors of }Q\text{ that start at the }i\text{\textsuperscript{th} letter of }Q},$$
and we let $$A = \set{a\mid Q \text{ contains an almost-square of length }a\text{ that starts at the } p\text{\textsuperscript{th} letter of }Q}.$$
We consider the nontrivial case, where $|A|>2$. 
Let $\paren{a_i}_{i\in [|A|]}$ be the strictly increasing sequence of the integers in $A$. Because all almost-squares have odd length, $a_{i+1} - a_i \geq 2$ for all $i\in [|A| - 1]$. We fix some integer $i\in [|A|-2]$ and consider  an almost-square $WW'$ of length $a_i$ that starts at the $p$\textsuperscript{th} letter of $Q$.

Let $XX'= WW'V$ be a factor of $Q$ that starts at the $p$\textsuperscript{th} letter of $Q$ and is an almost-square of length $a_{i+2}$. 
Since $Q$ is square-free and $|V| = a_{i+2}-a_i\geq 4$, \cref{lemma:almost-square} implies that we must have $|V|> \frac{1}{2}|W|$. It follows that $$|XX'| > |WW'| + \frac{1}{2}|W|.$$
Rewriting, we have $$a_{i+2}\geq \frac{5}{4}a_i,$$ which (combined with the fact that $a_2,a_3 \geq 3$) implies 
$$a_{2j}> \paren{\frac{5}{4}}^j\quad\text{and}\quad a_{2j+1}> \paren{\frac{5}{4}}^j\quad\text{for all}\quad 1\leq j\leq \floor{\frac{|A|}{2}}.$$
Thus, $$|A| <  2\log_{\frac{5}{4}}{n},$$ and we are done because the maximum number of almost-squares that start at any fixed letter in $Q$ is $|A|$.
\end{proof}
\cref{lemma: almost-square2} directly implies the following theorem.

\begin{theorem} \label{thm: almost-square}
The number of almost-squares in any square-free word of length $n\geq 2$ is less than $$2n\log_{\frac{5}{4}}{n}.$$
\end{theorem}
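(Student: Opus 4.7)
The plan is to note that the theorem is essentially a summation of the per-starting-position bound established in \cref{lemma: almost-square2}, so the proof should be very short. Every almost-square factor of a word $Q$ of length $n$ has a unique starting position, so the total count of almost-square factors in $Q$ equals $\sum_{i=1}^{n} N_i$, where $N_i$ denotes the number of almost-square factors of $Q$ beginning at the $i$th letter.

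First I would invoke \cref{lemma: almost-square2} to conclude that for each $i\in[n]$, we have $N_i < 2\log_{5/4} n$. Summing this over $i$ yields a total bound of at most $n \cdot 2\log_{5/4} n = 2n\log_{5/4} n$. Since the per-position bound is strict (and in particular holds for $N_p$, the maximum), the total is strictly less than $2n\log_{5/4} n$, which gives the desired inequality.

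There is no real obstacle here: all the work was done in \cref{lemma:almost-square} and \cref{lemma: almost-square2}. The only minor thing to be careful about is preserving the strict inequality when summing, but this follows immediately because each summand is strictly less than the uniform bound $2\log_{5/4} n$. Thus the proof can be written in a couple of lines, simply citing \cref{lemma: almost-square2} and multiplying by $n$.
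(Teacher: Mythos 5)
Your proof is correct and matches the paper's approach exactly: the paper states that \cref{lemma: almost-square2} "directly implies" the theorem, which is precisely the summation over the $n$ starting positions that you describe.
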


We will use the result in \cref{thm: almost-square} to prove \cref{thm: emily_main}. We begin with a lemma.

\begin{lemma} \label{lemma: distinct-extension}
All square-free extensions of a word are distinct.
\end{lemma}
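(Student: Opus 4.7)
The plan is to reduce the statement to a concrete position-by-position comparison: any extension of $W = w_1 w_2 \cdots w_n$ is a word of the form $E_i^x := w_1 \cdots w_i \, x \, w_{i+1} \cdots w_n$ for some $0 \le i \le n$ and some $x \in \mathds{A}$, so ``all square-free extensions are distinct'' really means that the map $(i, x) \mapsto E_i^x$ is injective when restricted to those pairs producing a square-free word. I would establish the contrapositive: if two distinct insertions $(i,x)$ and $(j,y)$ yield the same word, then that word contains a square.

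Concretely, suppose $E_i^x = E_j^y$ and, without loss of generality, $i \le j$. If $i = j$ then comparing position $i+1$ forces $x = y$, so the two insertions are not distinct. Hence I may assume $i < j$. Reading off the letters of $E_i^x$ and $E_j^y$ in positions $i+1, i+2, \dots, j+1$ and equating them gives the chain of equalities $x = w_{i+1},\; w_{i+1} = w_{i+2}, \; \dots, \; w_{j-1} = w_j,\; w_j = y$. In other words, the interior block $w_{i+1} w_{i+2} \cdots w_j$ is a run of $j - i \ge 1$ consecutive copies of the inserted letter $x$.

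The finishing step is now immediate: in $E_i^x$ the inserted letter $x$ sits at position $i+1$ and is followed by $w_{i+1} = x$ at position $i+2$, so $E_i^x$ contains $xx$, a square. This contradicts the assumption that $E_i^x$ is square-free, completing the contrapositive. I do not anticipate any real obstacle; the only thing to be slightly careful about is bookkeeping the index shift caused by the insertion (positions to the right of $i$ in $E_i^x$ correspond to positions shifted by one in $W$), which is why comparing $E_i^x$ and $E_j^y$ position by position on the overlap interval $[i+1, j+1]$ is the cleanest way to extract the forced run of repeated letters.
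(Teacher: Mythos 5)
Your proof is correct and follows essentially the same route as the paper: assume two distinct insertions yield the same word, deduce that the letters between the two insertion points form a run of the inserted letter, and conclude that the word contains the square $xx$. Your position-by-position chain of equalities actually spells out the key step more explicitly than the paper's proof, which only remarks that some letter's position is shifted by one.
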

\begin{proof}
\begin{sloppypar}
Let $W = w_1w_2\dots w_n$, with $w_i$ in alphabet $\mathds{A}$. 
For some $\alpha, \beta \in \mathds{A}$ and integers ${1\leq i\leq j\leq n+1}$, let $E_1$ be the extension with letter $\alpha$ inserted as the $i$\textsuperscript{th} letter of $W$, and let $E_2$ be the extension with $\beta$ inserted as the $j$\textsuperscript{th} letter of $W$.
\end{sloppypar}

Suppose, for contradiction, that $E_1$ and $E_2$ are formed by extending the word $W$ in two different ways and that they are identical words. Then, $\alpha = \beta$ and $i\neq j$, and there is at least one letter in $W$ whose position in $E_1$ is shifted by one place from its position in $E_2$. Thus, the identical extensions $E_1$ and $E_2$ are not square-free, a contradiction.
\end{proof}

\begin{proof}[Proof of \cref{thm: emily_main}]
Let $W$ be an extremal square-free of length $n$ word on a $k$-letter alphabet. By \cref{lemma: distinct-extension}, every almost-square in $W$ can be extended to become a square factor in at most two ways: extending the left half to match the square-free right half and vice versa. There are $k(n+1)$ extensions of $W$, so there must be at least $\frac{k(n+1)}{2}$ distinct almost-squares in $W$. Using \cref{thm: almost-square}, we have
$$2n\log_{\frac{5}{4}}{n} \geq \frac{k(n+1)}{2},$$ so
$4\log_{\frac{5}{4}}{n} > k$. It follows that
$$n > \paren{\frac{5}{4}}^{\frac{k}{4}},$$ as desired.
\end{proof}

\section{Future directions}\label{section:future}

There are many patterns $P$ for which one can explore extremal $P$-avoiding words. One family of avoidable patterns that closely resembles the family studied in \cref{section: natasha} of this paper is the family of patterns of the form $XYXYX\dots XYX$. 

The bounds that we obtained in \cref{thm: almost-square} and \cref{thm: emily_main} are not known to be sharp and may be improved. Additionally, it is unknown whether an upper bound corresponding to the lower bound given in \cref{thm: emily_main} exists.

\begin{question}
Does there exist an upper bound on the longest possible length of an extremal square-free word on a $k$-letter alphabet?
\end{question}

Computer experiments conducted by Grytczuk et al.\ \cite{grytczuk2019extremal} failed to find an extremal square-free word over a four-letter alphabet with length less than 100.  
We conducted similar computer experiments for extremal abelian square-free words, where an abelian square is the concatenation of two nonempty words that are permutations of each other. 
We found that a shortest extremal abelian square-free word over the four-letter alphabet $\set{a,b,c,d}$ is 
$$abcdbacbdcba.$$
It is known that there are infinitely many abelian square-free words over a four-letter alphabet \cite{keranen1992abelian}, and our computer experiments lead us to conjecture that the same is true for extremal abelian square-free words over a four-letter alphabet. We believe that it may be possible to prove this by adapting the method that Grytczuk et al.\ used to prove their main result in \cite{grytczuk2019extremal}.
\begin{conjecture}
There are infinitely many extremal abelian square-free words over a four-letter alphabet.
\end{conjecture}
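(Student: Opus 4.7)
The plan is to mimic the strategy of Grytczuk, Kordulewski, and Niewiadomski~\cite{grytczuk2019extremal}: identify a single extremal abelian square-free seed word $W_0$ over $\{a,b,c,d\}$ and a transformation $\varphi$ sending each extremal abelian square-free word $W$ to a strictly longer extremal abelian square-free word $\varphi(W)$. Iterating would then produce an infinite sequence $W_0,\varphi(W_0),\varphi^2(W_0),\dots$ of distinct extremal abelian square-free words. For the seed, I would take $W_0=abcdbacbdcba$ or a slightly longer extremal word located by computer search. The transformation would be a local one: find a short factor $U$ appearing in $W$ in a specific context and replace it with a longer factor $U'$ whose letters are arranged so that $\varphi(W)$ is still abelian square-free and so that the extremality property is preserved.

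A natural candidate for $\varphi$ is to splice in a carefully chosen block drawn from Ker\"anen's construction of an infinite abelian square-free word over a four-letter alphabet~\cite{keranen1992abelian}; such blocks interact predictably with surrounding letter-count constraints and can be tailored in both length and multiset content. First, I would enumerate the short insertion sites in $W_0$ that survive the abelian square-free condition after splicing; second, I would isolate a ``template'' context (for example, a few letters immediately before and after the splice point) that is guaranteed to occur in every $\varphi^n(W_0)$, so that $\varphi$ can be applied inductively; third, I would verify that the transformation strictly increases the word length, so that iteration yields infinitely many distinct words.

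The verification splits into two parts. To show that $\varphi(W)$ is abelian square-free, I would combine the abelian square-freeness of $W$ and of the inserted block, and control the boundary by bounding the possible lengths of a would-be abelian square that straddles the seam and checking that its two halves have distinct letter multiplicities. To show that $\varphi(W)$ is extremal, I would argue that every single-letter extension of $\varphi(W)$ either corresponds to an extension of $W$ (which already creates an abelian square by hypothesis) or falls inside the newly inserted block, in which case the block has been chosen precisely so that every internal insertion creates a local abelian square.

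The principal technical obstacle will be the extremality half of the verification. Abelian squares are much more plentiful than ordinary squares because the matching condition is multiset-based rather than position-by-position, so an insertion can create an abelian square at many different lengths and anchored at many different positions. In particular, one cannot reduce extremality to checking a small window around each insertion site; one must rule out that any extension of $\varphi(W)$ is abelian square-free over its \emph{entire} length, and this global nature of the constraint is what makes the case analysis delicate. I would expect the final argument to lean on a highly structured choice of $\varphi$ together with a finite, computer-assisted check that every insertion into the inserted block, and every insertion interacting with its immediate neighborhood, produces an abelian square with bounded half-length.
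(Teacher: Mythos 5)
This statement is a \emph{conjecture} in the paper, not a theorem: the authors offer no proof, only computational evidence (a shortest extremal abelian square-free word of length $12$ over four letters) and the remark that one might adapt the method of Grytczuk, Kordulewski, and Niewiadomski. Your proposal is therefore not competing with an argument in the paper; it is a research plan for an open problem, and it should be judged as such. As a plan it points in the same direction the authors suggest, but it does not constitute a proof: no transformation $\varphi$ is actually exhibited, no seed-plus-induction verification is carried out, and the step you yourself flag as the ``principal technical obstacle'' --- preserving extremality under the splice --- is precisely the content of the conjecture. Saying that the inserted block ``has been chosen precisely so that every internal insertion creates a local abelian square'' assumes the existence of the object whose existence is the whole difficulty.

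There is also a concrete reason to doubt the specific mechanism you propose. Extremality is an \emph{anti}-avoidance property: every one of the $4(n+1)$ single-letter insertions must create an abelian square. Blocks drawn from Ker\"anen's infinite abelian square-free word are engineered to be as far from containing abelian squares as possible, so splicing such a block into $W$ tends to create many new insertion sites at which no abelian square arises, destroying extremality rather than preserving it. Moreover, because abelian squares are detected by letter multisets rather than positions, an insertion anywhere in $\varphi(W)$ changes the Parikh vectors of \emph{all} factors containing that position, so the reduction ``either the insertion corresponds to an extension of $W$ or it falls inside the new block'' does not localize the problem the way it does for ordinary squares: an insertion far from the seam can still fail to create an abelian square in $\varphi(W)$ even though it created one in $W$, since the would-be square in $W$ may now straddle the inserted block with altered multiplicities. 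Any successful adaptation of the Grytczuk et al.\ machinery will need a structural invariant that controls Parikh vectors globally, and identifying that invariant is the open problem; your outline does not yet supply it.
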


\appendix
\section{Proving the inequalities from the proof of \cref{lemma:almost-square}}

In this appendix, we prove the inequalities used in the proof of \cref{lemma:almost-square}.
Here, $WW'V = XX'$, where $WW'$ and $XX'$ are almost-squares and ${4\leq |V| \leq \frac{1}{2}|W|}$. 

\noindent\textbf{Claim 1.} $|W|<|X|<|WW'|$.
\begin{proof}[Proof of Claim 1.]
Using $|V|\geq 4$, we have $$|X| \geq \frac{|WW'V|-1}{2}\geq \frac{|WW'|+3}{2}\geq \frac{|W|+(|W|-1)+3}{2}>|W|.$$
Similarly, using $|V| \leq |W|$ and $|W'|\geq 3$, we have
\begin{align*}
    |X|\leq \frac{|WW'V|+1}{2} \leq\frac{|WW'| + |W|+(|W'|-3)+1}{2} < |WW'|. &\qedhere
\end{align*}
\end{proof}

\noindent\textbf{Claim 2.} $ |V|<|X| -1$.
\begin{proof}[Proof of Claim 2.]
Using $|W|\geq 2|V|\geq 8$, we have that
\begin{align*}
   &|W'|\geq |W| - 1\geq 7\\
   \Longrightarrow &|WW'|\geq |V| + 7\\
   \Longrightarrow &|XX'| = |WW'V| \geq 2|V|+7\\
   \Longrightarrow &|X| > |V|+1. & &\qedhere
\end{align*}
\end{proof}
\noindent\textbf{Claim 3.} $0 < \ell <|W|-1$, where $\ell := |V| - |X|+|W|$.
\begin{proof}[Proof of Claim 3.] Using $|V|\geq 4$, we have 
\begin{align*}
    \ell &= |V| - |X|+|W| \\
    &\geq |V| - \frac{|WW'V|+1}{2} + \frac{|WW'|-1}{2} \\
    &= \frac{|V|}{2} - 1 
    \geq 1. 
\end{align*}
The inequality $\ell <|W|-1$ follows directly from $|V|<|X| -1$ in Claim 2.
\end{proof}

\section*{Acknowledgments}
This research was conducted at the 2020 University of Minnesota Duluth Research Experience for Undergraduates (REU) program, which is supported by NSF-DMS grant 1949884 and NSA grant H98230-20-1-0009.  We would like to thank Joe Gallian for organizing the program, suggesting the problem, and supervising the research. We would also like to thank Amanda Burcroff, Colin Defant, Yelena Mandelshtam, and Joe Gallian for reading this paper and giving valuable suggestions and Amanda Burcroff for useful discussions about this research.

\bibliographystyle{plain}
\bibliography{ref}

\end{document}